\numberwithin{equation}{section}
\newcommand{\beq}{\begin{equation} }
\newcommand{\eqq}{\end{equation} }
\newcommand{\cuad}{{\sqcap\kern-.68em\sqcup}}
\newtheorem{remark}{Remark}[section]
\newcommand{\bremark}{\begin{remark} \em}
\newcommand{\eremark}{\end{remark} }
\def\beeq{\begin{equation}}
\def\eeq{\end{equation}}
\newcommand{\begeqaet}{\begin{eqnarray*}}
\newcommand{\eneqaet}{\end{eqnarray*}}
\let\Section=\section
\def\section{\setcounter{equation}{0}\Section}
\newtheorem{Lem}{Lemma}[section]
\newtheorem{Thm}{Theorem}[section]
\begin{document}
\begin{center}{\bf\Large Existence and concentration of solution for a class of fractional Hamiltonian systems with subquadratic potential}\medskip

\bigskip

\bigskip

{C\'esar E. Torres Ledesma}

 Departamento de Matem\'aticas, \\
Universidad Nacional de Trujillo\\
Av. Juan Pablo Segundo s/n Trujillo, Per\'u.\\
 {\sl  (ctl\_576@yahoo.es, ctorres@dim.uchile.cl)}


\medskip

\medskip
\medskip
\medskip
\medskip

\end{center}

\centerline{\bf Abstract}
This article study the fractional Hamiltonian systems
\begin{eqnarray}\label{00}
{_{t}}D_{\infty}^{\alpha}({_{-\infty}}D_{t}^{\alpha}u) + \lambda L(t)u = \nabla W(t, u), \;\;t\in \mathbb{R},
\end{eqnarray}
where $\alpha \in (1/2, 1)$, $\lambda >0$ is a parameter, $L\in C(\mathbb{R}, \mathbb{R}^{n\times n})$ and $W \in C^{1}(\mathbb{R} \times \mathbb{R}^n, \mathbb{R})$. Unlike most other papers on this problem, we require that $L(t)$ is a positive semi-definite symmetric matrix for all $t\in \mathbb{R}$, that is, $L(t) \equiv 0$ is allowed to occur in some finite interval $\mathbb{I}$ of $\mathbb{R}$. Under some mild assumptions on $W$, we establish the existence of nontrivial weak solution, which vanish on $\mathbb{R} \setminus \mathbb{I}$ as $\lambda \to \infty,$ and converge to $\tilde{u}$ in $H^{\alpha}(\mathbb{R})$; here $\tilde{u} \in E_{0}^{\alpha}$ is nontrivial weak solution of the Dirichlet BVP for fractional Hamiltonian systems on the finite interval $\mathbb{I}$. 

\medskip

\noindent
{\bf MSC:} 26A33, 34C37, 35A15, 35B38.\\
{\bf Key words:} Liouville-Weyl fractional derivative, fractional Sobolev space, critical point theory, mountain pass theorem. 
\medskip

\date{}

\setcounter{equation}{0}
\section{ Introduction}

In this paper we investigate the solvability  of the following non homogeneous fractional Hamiltonian system 
\begin{eqnarray}\label{01}
{_{t}}D_{\infty}^{\alpha}({_{-\infty}}D_{t}^{\alpha}u) + \lambda L(t)u = \nabla W(t, u), \;\;t\in \mathbb{R},
\end{eqnarray}
where $\alpha \in (1/2,1)$, $W\in C(\mathbb{R} \times \mathbb{R}^{n}, \mathbb{R})$, the parameter $\lambda >0$, ${_{-\infty}}D_{t}^{\beta}$ and ${_{t}}D_{\infty}^{\beta}$ denote left and right Liouville-Weyl fractional derivative of order $\alpha$ respectively and are defined by
$$
{_{-\infty}}D_{t}^{\beta}u = \frac{d}{dt}{_{-\infty}}I_{t}^{\alpha}u,\;\; {_{t}}D_{\infty}^{1-\alpha}u = -\frac{d}{dt}{_{t}}I_{\infty}^{1-\alpha}.
$$
and the matrix $L$ satisfies the following conditions:
\begin{enumerate}
\item[($L_1$)] $L(t)\in C(\mathbb{R}, \mathbb{R}^{n\times n})$ is a symmetric matrix for all $t\in \mathbb{R}$;  there exists a nonnegative continuous function $l:\mathbb{R} \to \mathbb{R}$ and a constant $k>0$ such that 
$$
(L(t)u(t), u(t)) \geq l(t)|u(t)|^2,
$$
and the set $\{l<k\} = \{t\in \mathbb{R}:\;l(t) < k\}$ is nonempty with $C_{\alpha}^2|\{l<k\}| < 1$, where $|.|$ is the Lebesgue measure and $C_{\alpha}$ is the Sobolev constant (see section \S 2). 
\item[($L_2$)] $\mathbb{J} = int(l^{-1}(0))$ is a nonempty finite interval and $\overline{\mathbb{J}} = l^{-1}(0)$.
\item[($L_3$)] There exists an open interval $\mathbb{I} \subset \mathbb{J}$ such that $L(t) \equiv 0$ for all $t\in \overline{\mathbb{I}}$.
\end{enumerate}

Fractional differential equations appear naturally in a number of fields such as physics, chemistry, biology, economics, control theory, signal and image processing, blood flow phenomena, etc. During last decades, the theory of fractional differential equations is an area intensively developed, due mainly to the fact that fractional derivatives provide an excellent tool for the description of memory and hereditary properties of various materials and processes (see \cite{RHe, RH, AKHSJT, IP, BWMBPG} and the references therein). 

Physical models containing left and right fractional differential operators have recently renewed attention from scientists which is mainly due to applications as models for physical phenomena exhibiting anomalous diffusion (see \cite{DBRSMM, DBSWMM1, DBSWMM2, JFJR, JLTB, AMCT, ES, CT, CT1, CT2}). A strong motivation for investigating the fractional differential equation (\ref{01}) comes from symmetry fractional advection-dispersion equation. A fractional advection-dispersion equation is a generalization of the classical ADE in which the second-order derivative is replaced with a fractional-order derivative. In contrast to the classical ADE, the fractional ADE has solutions that resemble the highly skewed and heavy-tailed breakthrough curves observed in field and laboratory studies \cite{DBRSMM, DBSWMM1}, in particular in contaminant transport of ground-water flow \cite{DBSWMM2}. In \cite{DBSWMM2}, the authors stated that solutes moving through a highly heterogeneous aquifer violations violates the basic assumptions of local second-order theories because of large deviations from the stochastic process of Brownian motion. Moreover, models involving a fractional differential oscillator equation, which contains a composition of left and right fractional derivatives, are proposed for the description of the processes of emptying the silo \cite{JLTB} and the heat flow through a bulkhead filled with granular material \cite{ES}, respectively. Their studies show that the proposed models based on fractional calculus are efficient and describe well the processes.


Very recently, in \cite{CT} the author considered (\ref{01}), 
where $L\in C(\mathbb{R}, \mathbb{R}^{n^2})$ is a symmetric matrix valued function for all $t\in \mathbb{R}$, $W\in C^{1}(\mathbb{R}\times \mathbb{R}^{n}, \mathbb{R})$ and $\nabla W(t, u(t))$ is the gradient of $W$ at $u$. Assuming that $L$ and $W$ satisfy the following hypotheses:
\begin{itemize}
\item[$(L)$] $L(t)$ is positive definite symmetric matrix for all $t\in \mathbb{R}$, and there exists an $l\in C(\mathbb{R}, (0,\infty))$ such that $l(t) \to +\infty$ as $t \to \infty$ and
    \begin{equation}\label{Eq03}
    (L(t)x,x) \geq l(t)|x|^{2},\;\;\mbox{for all}\;t\in \mathbb{R}\;\;\mbox{and}\;\;x\in \mathbb{R}^{n}.
    \end{equation}
\item[$(W_{1})$] $W\in C^{1}(\mathbb{R} \times \mathbb{R}^{n}, \mathbb{R})$, and there is a constant $\mu >2$ such that
$$
0< \mu W(t,x) \leq (x, \nabla W(t,x)),\;\;\mbox{for all}\;t\in \mathbb{R}\;\;\mbox{and}\;x\in\mathbb{R}^{n}\setminus \{0\}.
$$
\item[$(W_{2})$] $|\nabla W(t,x)| = o(|x|)$ as $x\to 0$ uniformly with respect to $t\in \mathbb{R}$.
\item[$(W_{3})$] There exists $\overline{W} \in C(\mathbb{R}^{n}, \mathbb{R})$ such that
$$
|W(t,x)| + |\nabla W(t,x)| \leq |\overline{W(x)}|\;\;\mbox{for every}\;x\in \mathbb{R}^{n}\;\mbox{and}\;t\in \mathbb{R}.
$$
\end{itemize}
The paper \cite{CT} 
 showed that (\ref{01}) has at least one nontrivial solution via Mountain pass theorem.

In particular, if $\alpha = 1$, (\ref{01}) reduces to the standard second order differential equation
\begin{equation}\label{HEq01}
u'' - L(t)u + \nabla W(t,u)=0,
\end{equation}
where $W: \mathbb{R} \times \mathbb{R}^{n} \to \mathbb{R}$ is a given function and $\nabla W(t,u)$ is the gradient of $W$ at $u$. The existence of homoclinic solution is one of the most important problems in the history of that kind of equations, and has been studied intensively by many mathematicians. Assuming that $L(t)$ and $W(t,u)$ are independent of $t$, or $T$-periodic in $t$, many authors have studied the existence of homoclinic solutions for (\ref{HEq01}) via critical point theory and variational methods. In this case, the existence of homoclinic solution can be obtained by going to the limit of periodic solutions of approximating problems.

If $L(t)$ and $W(t,u)$ are neither autonomous nor periodic in $t$, this problem is quite different from the ones just described, because the lack of compactness of the Sobolev embedding. In \cite{PRKT} the authors considered (\ref{HEq01}) without periodicity assumptions on $L$ and $W$ and showed that (\ref{HEq01}) possesses one homoclinic solution by using a variant of the mountain pass theorem without the Palais-Smale condition. In \cite{WOMW}, under the same assumptions of \cite{PRKT}, the authors, by employing a new compact embedding theorem, obtained the existence of homoclinic solution of (\ref{HEq01}).

Motivated by the previously mentioned results, using the genus properties of critical point theory, in \cite{ZZRY}, the authors generalized the result of \cite{CT} and established some new criterion to guarantee the existence of infinitely many solutions of (\ref{01}) for the case that $W(t,u)$ is subquadratic as $|u| \to +\infty$. 

As is well-known, the condition $(L)$ is the so-called coercive condition and is a little demanding. In fact, for a simple choice like $L(t) = sId_{n}$, the condition ($L$) is not satisfied, where $s>0$ and $Id_{n}$ is the $n\times n$ identity matrix. Considering this trouble, in \cite{ZZRY1}  the recent results in \cite{ZZRY} are generalized and significantly improved. More precisely  in \cite{ZZRY1} the authors considered the case that L(t) is bounded in the sense that
\begin{itemize}
\item[$(L)'$] There are constants $0 < \tau_{1} < \tau_{2} <+\infty$ such that
$$
\tau_{1}|u|^2 \leq (L(t)u,u) \leq \tau_{2}|u|^2\;\;\mbox{for all} \;\;(t,u)\in \mathbb{R}\times \mathbb{R}^n.
$$
\end{itemize}
Again, using the genus properties of critical point theory, the authors proved that (\ref{01}) has infinitely many nontrivial solutions. 

Very recently, using the fountain theorem of critical point theory, in \cite{JXDOKZ}, the authors established the existence of infinitely many solutions of (\ref{01}) for the case that $W(t,u)$ is subquadratic as $|u| \to 0$ and superqudratic as $|u| \to \infty$.     


Motivated by the above articles, we continue to consider problem (\ref{01}) with the positive semi-definite matrix $L$ and study the existence of nontrivial weak solutions when $W$ is sub-quadratic. Furthermore, more importantly, we shall explore the phenomenon of concentrations of weak solution as $\lambda \to \infty$, which seems to be rarely concerned in the previous studies of solutions for fractional Hamiltonian systems. To reduce our statements, we make the following assumptions on $W$:
\begin{enumerate}
\item[($W_1$)] $W\in C^1(\mathbb{R} \times \mathbb{R}^{n}, \mathbb{R})$ and there exist a constant $p\in (1,2)$ and function $\xi(t) \in L^{\frac{2}{2-p}}(\mathbb{R}, \mathbb{R}^+)$ such that
\begin{equation}\label{05}
|\nabla W(t, u)|\leq \xi(t)|u|^{p-1}, \;\;\mbox{for all}\;\;(t,u)\in \mathbb{R} \times \mathbb{R}^n.
\end{equation}
\item[($W_2$)] There exist three constants $\eta, \delta >0$ and $\nu \in (1,2)$ such that
\begin{equation}\label{06}
|W(t, u)| \geq \eta |u|^{\nu},\;\;\forall\; t\in \mathbb{I}\;\mbox{and}\;\;|u|\leq \delta.
\end{equation}
\end{enumerate} 

On the existence of solutions we have the following result.
\begin{Thm}\label{main1}
Assume that the conditions $(L_1)-(L_2)$ and $(W_1)-(W_3)$ hold. Then there exists $\Lambda >0$ such that for every $\lambda > \Lambda$, problem (\ref{01}) has at least one weak solution $u_\lambda$.
\end{Thm}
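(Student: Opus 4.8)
The plan is to treat Theorem \ref{main1} by the direct method of the calculus of variations rather than by a mountain--pass argument, because the subquadratic growth imposed in $(W_1)$ forces the energy to be coercive and bounded below. First I would fix the variational framework: on the fractional Sobolev space $H^{\alpha}(\mathbb{R})$ introduce, for each $\lambda>0$, the quantity
$$
\|u\|_{\lambda}^{2}=\int_{\mathbb{R}}|{_{-\infty}}D_{t}^{\alpha}u|^{2}\,dt+\lambda\int_{\mathbb{R}}(L(t)u,u)\,dt,
$$
let $E_{\lambda}=\{u\in H^{\alpha}(\mathbb{R}):\|u\|_{\lambda}<\infty\}$ be the associated space, and set
$$
I_{\lambda}(u)=\frac{1}{2}\|u\|_{\lambda}^{2}-\int_{\mathbb{R}}W(t,u)\,dt .
$$
A routine computation shows $I_{\lambda}\in C^{1}(E_{\lambda},\mathbb{R})$ and that its critical points are exactly the weak solutions of (\ref{01}); note $C_{c}^{\infty}(\mathbb{I})\subset E_{\lambda}$ since $L\equiv0$ on $\overline{\mathbb{I}}$ by $(L_3)$.

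The first genuine difficulty, and the step I expect to be the technical heart, is that $L$ is only positive semi--definite, so $\|\cdot\|_{\lambda}$ need not by itself control the $L^{2}$--norm. This is where $(L_1)$ and the smallness condition $C_{\alpha}^{2}|\{l<k\}|<1$ enter. Splitting $\int_{\mathbb{R}}|u|^{2}$ over $\{l\ge k\}$ and $\{l<k\}$, on the first set one has $|u|^{2}\le k^{-1}(L(t)u,u)$, hence $\int_{\{l\ge k\}}|u|^{2}\le(\lambda k)^{-1}\|u\|_{\lambda}^{2}$, while on the second set the embedding $\|u\|_{L^{\infty}}\le C_{\alpha}\|u\|_{H^{\alpha}}$ gives $\int_{\{l<k\}}|u|^{2}\le C_{\alpha}^{2}|\{l<k\}|\,\|u\|_{H^{\alpha}}^{2}$. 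Writing $\theta=C_{\alpha}^{2}|\{l<k\}|<1$ and absorbing, this produces an estimate $\|u\|_{H^{\alpha}}^{2}\le C(\lambda)\|u\|_{\lambda}^{2}$ with $C(\lambda)$ bounded for large $\lambda$; equivalently there is $\Lambda>0$ such that for $\lambda>\Lambda$ one has a uniform embedding $E_{\lambda}\hookrightarrow H^{\alpha}(\mathbb{R})$. It is precisely the smallness of $\theta$ that closes the absorption inequality and fixes the threshold $\Lambda$.

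With this embedding in hand, $(W_1)$ yields $|W(t,u)|\le p^{-1}\xi(t)|u|^{p}$, so by H\"older with exponents $\tfrac{2}{2-p}$ and $\tfrac{2}{p}$,
$$
\int_{\mathbb{R}}|W(t,u)|\,dt\le \tfrac{1}{p}\|\xi\|_{L^{2/(2-p)}}\|u\|_{L^{2}}^{p}\le C\|u\|_{\lambda}^{p}.
$$
Since $p<2$, $I_{\lambda}(u)\ge\tfrac12\|u\|_{\lambda}^{2}-C\|u\|_{\lambda}^{p}$ is coercive and bounded below on $E_{\lambda}$. Taking a minimizing sequence $u_{n}\rightharpoonup u_{\lambda}$, the quadratic part is weakly lower semicontinuous, while the integrability and decay of $\xi$ in $(W_1)$ give tightness at infinity and hence $\int W(t,u_{n})\to\int W(t,u_{\lambda})$; this weak continuity of the nonlinear term is the one compactness point that must be verified with care, and it is exactly what $\xi\in L^{2/(2-p)}$ is designed to supply. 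Consequently $I_{\lambda}$ attains its infimum at some $u_{\lambda}$, which is a weak solution.

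Finally, to exclude $u_{\lambda}\equiv0$ it suffices to show $\inf_{E_{\lambda}}I_{\lambda}<0=I_{\lambda}(0)$, and here $(L_3)$ is essential. Fix $\varphi\in C_{c}^{\infty}(\mathbb{I})$ with $\varphi\not\equiv0$; since $L\equiv0$ on $\overline{\mathbb{I}}$ the term $\lambda\int(L(t)\varphi,\varphi)\,dt$ vanishes, so $\|s\varphi\|_{\lambda}^{2}=s^{2}\int_{\mathbb{R}}|{_{-\infty}}D_{t}^{\alpha}\varphi|^{2}\,dt$ is independent of $\lambda$. For $s>0$ small enough that $|s\varphi|\le\delta$ on $\mathbb{I}$, $(W_2)$ gives $\int_{\mathbb{I}}W(t,s\varphi)\,dt\ge\eta s^{\nu}\int_{\mathbb{I}}|\varphi|^{\nu}\,dt$, whence
$$
I_{\lambda}(s\varphi)\le\tfrac{s^{2}}{2}\int_{\mathbb{R}}|{_{-\infty}}D_{t}^{\alpha}\varphi|^{2}\,dt-\eta s^{\nu}\int_{\mathbb{I}}|\varphi|^{\nu}\,dt .
$$
Because $\nu<2$ the negative term dominates as $s\to0^{+}$, so $I_{\lambda}(s\varphi)<0$ for small $s$, giving $\inf I_{\lambda}<0$ and a nontrivial $u_{\lambda}$ for every $\lambda>\Lambda$. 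The only point left to confirm is that $W$ is nonnegative on $\mathbb{I}$ near the origin, which is the role played by $(W_2)$.
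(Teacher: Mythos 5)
Your proposal is correct and follows the paper's variational skeleton in every structural step except one. The shared parts: the uniform embedding obtained by splitting $\int_{\mathbb{R}}|u|^{2}$ over $\{l\ge k\}$ and $\{l<k\}$ and absorbing via $C_{\alpha}^{2}|\{l<k\}|<1$ is exactly the paper's Lemmas 2.1--2.2 (with the same threshold $\Lambda$); the coercivity bound $I_{\lambda}(u)\ge\tfrac12\|u\|_{\lambda}^{2}-C\|u\|_{\lambda}^{p}$ is the paper's Lemma 3.1; and the nontriviality step (test functions supported in $\mathbb{I}$, where $(L_3)$ kills the $\lambda$-dependence, plus $(W_2)$ with $\nu<2$ forcing $\inf I_{\lambda}<0$) is identical to the paper's final argument --- including the small silent point you flag, namely dropping the absolute value in $(W_2)$ to get $W\ge\eta|u|^{\nu}$, which the paper also does without comment. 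Where you genuinely diverge is the mechanism that produces the minimizer: the paper verifies the Palais--Smale condition (Lemma 3.2), by splitting $\int_{\mathbb{R}}|\nabla W(t,u_n)-\nabla W(t,u)|\,|u_n-u|\,dt$ into $|t|\le T$ and $|t|>T$ using $L^{\infty}_{loc}$-compactness and the $L^{2/(2-p)}$ tail of $\xi$, and then invokes Rabinowitz's abstract minimization theorem (bounded below plus PS implies the infimum is a critical value). You instead run the direct method: minimizing sequence, weak lower semicontinuity of the quadratic part, and weak continuity of $u\mapsto\int_{\mathbb{R}}W(t,u)\,dt$, which is proved by exactly the same tail-splitting estimate. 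So the analytic core is the same; the packaging differs. Your route is more elementary, avoiding the Ekeland/deformation machinery hidden in Rabinowitz's lemma, but the paper's route buys a strictly stronger byproduct --- strong convergence of arbitrary PS sequences --- which is the kind of compactness that resurfaces in the concentration analysis of Theorem 1.2. One point you should make explicit when writing this up: weak convergence in $E_{\lambda}$ together with the uniform embedding into $H^{\alpha}(\mathbb{R})$ gives, after passing to a subsequence, convergence in $L^{\infty}_{loc}$ (via the compact embedding of $H^{\alpha}$ into $C$ on bounded intervals), and this is what legitimizes the local part of your weak-continuity claim; as stated, "tightness at infinity" alone does not finish that step.
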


For technical reason, we consider that there exists $0<\mathbb{T}<+\infty$, such that $\mathbb{I} = (0,\mathbb{T})$, where $\mathbb{I}$ is given by $(L_3)$. On the concentration of solutions we have the following result.
\begin{Thm}\label{main2}
Let $u_\lambda$ be  a solution of problem (\ref{01}) obtained in Theorem \ref{01}, then $u_\lambda \to \tilde{u}$ strongly in $H^{\alpha}(\mathbb{R})$ as $\lambda \to \infty$, where $\tilde{u}$ is a nontrivial weak solution of the equation
\begin{eqnarray}\label{08}
&{_{t}}D_{\mathbb{T}}^{\alpha} {_{0}}D_{t}^{\alpha}u  = \nabla W(t, u),\quad t\in (0, \mathbb{T}),\\
& u(0) = u(\mathbb{T}) = 0.\nonumber
\end{eqnarray}
\end{Thm}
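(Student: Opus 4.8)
The plan is to carry out a penalization/concentration argument of Bartsch--Wang type, adapted to the Liouville--Weyl setting. Write $I_\lambda$ for the energy functional associated with (\ref{01}),
\[
I_\lambda(u)=\tfrac12\int_{\mathbb{R}}|{_{-\infty}}D_t^\alpha u|^2\,dt+\tfrac{\lambda}{2}\int_{\mathbb{R}}(L(t)u,u)\,dt-\int_{\mathbb{R}}W(t,u)\,dt,
\]
abbreviate $D^\alpha u:={_{-\infty}}D_t^\alpha u$, and set $\|u\|_\lambda^2=\int_{\mathbb{R}}|D^\alpha u|^2\,dt+\lambda\int_{\mathbb{R}}(L(t)u,u)\,dt$, so $u_\lambda$ is a critical point of $I_\lambda$ on $E_\lambda=\{u\in H^\alpha(\mathbb{R}):\|u\|_\lambda<\infty\}$. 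The first task is a uniform bound. Because $C_\alpha^2|\{l<k\}|<1$ in $(L_1)$, splitting $\mathbb{R}$ into $\{l\ge k\}$ and $\{l<k\}$ and using the Sobolev estimate $\|u\|_{L^2(\{l<k\})}\le C_\alpha|\{l<k\}|^{1/2}\|D^\alpha u\|_{L^2}$ on the small set yields, for all $\lambda\ge1$, an equivalence $\|u\|_{H^\alpha(\mathbb{R})}^2\le C\|u\|_\lambda^2$ with $C$ independent of $\lambda$. Feeding the subquadratic bound from $(W_1)$ (which gives $\int W\le C\|\xi\|_{2/(2-p)}\|u\|_{H^\alpha}^p$ by H\"older and $H^\alpha\hookrightarrow L^\infty$, valid since $\alpha>1/2$) into $I_\lambda(u)\ge\tfrac12\|u\|_\lambda^2-C\|u\|_\lambda^{p}$ shows that the uniformly bounded critical level furnished by the proof of Theorem \ref{main1} forces $\sup_{\lambda>\Lambda}\|u_\lambda\|_\lambda<\infty$, hence $\sup_\lambda\|u_\lambda\|_{H^\alpha}<\infty$.

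Next I would pass to a weak limit and identify the concentration set. Along a subsequence $u_\lambda\rightharpoonup u_0$ in $H^\alpha(\mathbb{R})$, and since $\alpha>1/2$ the embedding $H^\alpha\hookrightarrow L^\infty_{loc}$ is compact, so $u_\lambda\to u_0$ uniformly on compacta. From $\lambda\int_{\mathbb{R}}(L(t)u_\lambda,u_\lambda)\le\|u_\lambda\|_\lambda^2\le C$ we get $\int(Lu_\lambda,u_\lambda)\le C/\lambda\to0$, and weak lower semicontinuity gives $\int(Lu_0,u_0)=0$; combined with $(Lu,u)\ge l|u|^2$ and the structure of $l$ in $(L_1)$--$(L_2)$ together with $(L_3)$, this forces $u_0$ to vanish wherever $L$ is nondegenerate, i.e. $u_0\equiv0$ a.e. on $\mathbb{R}\setminus\overline{\mathbb{I}}$. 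Hence $u_0\in E_0^\alpha$, vanishing at the endpoints of $\mathbb{I}=(0,\mathbb{T})$, and $L u_0\equiv0$ since $L\equiv0$ on $\overline{\mathbb{I}}\supset\mathrm{supp}\,u_0$.

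The crux is the simultaneous upgrade to strong convergence together with the vanishing of the penalization energy, and this is the step I expect to be the main obstacle. Testing $I_\lambda'(u_\lambda)=0$ against $u_\lambda-u_0$, symmetry of $L$ and $Lu_0\equiv0$ give $\lambda\int(Lu_\lambda,u_0)=\lambda\int(u_\lambda,Lu_0)=0$, while $(W_1)$ with $\xi\in L^{2/(2-p)}$, H\"older, and the local uniform convergence (tails controlled by the smallness of $\|\xi\|_{L^{2/(2-p)}(|t|>R)}$) yield $\int\nabla W(t,u_\lambda)\cdot(u_\lambda-u_0)\to0$. What remains is
\[
\Big(\int_{\mathbb{R}}|D^\alpha u_\lambda|^2\,dt-\int_{\mathbb{R}}|D^\alpha u_0|^2\,dt\Big)+\lambda\int_{\mathbb{R}}(Lu_\lambda,u_\lambda)\,dt\longrightarrow0 .
\]
Both summands are nonnegative in the limit, so each tends to $0$: the fractional derivatives converge in $L^2$-norm (hence strongly, with the weak convergence), and $\lambda\int(Lu_\lambda,u_\lambda)\to0$. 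For the $L^2$-tail, $\int_{\{l\ge k\}}|u_\lambda|^2\le(\lambda k)^{-1}\lambda\int(Lu_\lambda,u_\lambda)\to0$, the small set $\{0<l<k\}$ is absorbed by the Sobolev bound of Step 1, and $L^2(\mathbb{I})$-convergence is local compactness; altogether $u_\lambda\to u_0$ strongly in $H^\alpha(\mathbb{R})$.

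Finally I would identify the limit equation and prove nontriviality. Passing to the limit in the weak formulation tested against any $\varphi\in E_0^\alpha$, the penalization term drops because $L\equiv0$ on $\mathbb{I}$, the quadratic term converges by strong convergence, and the $\nabla W$ term converges by $(W_1)$; thus $u_0=\tilde u$ is a weak solution of (\ref{08}). The vanishing penalization gives $I_\lambda(u_\lambda)\to I_0(\tilde u)$, where $I_0$ denotes the energy of (\ref{08}) on $E_0^\alpha$. To rule out $\tilde u\equiv0$, fix $\phi\in E_0^\alpha\setminus\{0\}$ supported in $\mathbb{I}$ with $\|\phi\|_{L^\infty}\le\delta$: since $L\equiv0$ on $\mathbb{I}$ the penalization contribution of $s\phi$ vanishes for every $\lambda$, and the subquadratic lower bound $(W_2)$ with $\nu<2$ makes the potential term dominate for small amplitude, so $I_\lambda(s\phi)<0$ for small $s>0$, uniformly in $\lambda$. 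As the critical level of Theorem \ref{main1} is bounded above by such values, $I_\lambda(u_\lambda)\le-\delta_0<0$ with $\delta_0$ independent of $\lambda$; letting $\lambda\to\infty$ gives $I_0(\tilde u)\le-\delta_0<0=I_0(0)$, whence $\tilde u\not\equiv0$, completing the proof.
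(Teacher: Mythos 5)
Your proposal is correct and follows the same overall Bartsch--Pankov--Wang skeleton as the paper (which cites \cite{TBAPZW}): a $\lambda$-independent bound on $\|u_\lambda\|_\lambda$ coming from the negative critical level together with the coercivity estimate of Lemma \ref{elm1}; a weak limit that is forced to vanish where $l>0$ because $\lambda\int(Lu_\lambda,u_\lambda)\le C$; identification of the limit equation by testing with functions supported in $\mathbb{I}$ (where the $\lambda L$-term drops); and nontriviality via a $\lambda$-independent negative upper bound for the critical level --- the paper packages your explicit test-function computation as $\tilde c=\inf_{E_0^\alpha}I_\lambda|_{E_0^\alpha}<0$, but it is the same idea. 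The genuine difference is the compactness step. The paper first proves $u_n\to\tilde u$ in $L^p(\mathbb{R})$ by contradiction, invoking a Lions-type vanishing lemma (Lemma 2.1 of \cite{CT1}): failure of convergence would concentrate mass $\delta>0$ on intervals $(t_n-R_0,t_n+R_0)$ with $t_n\to\infty$, where $|(t_n-R_0,t_n+R_0)\cap\{l<k\}|\to 0$, forcing $\|u_n\|_{\lambda_n}^2\ge \lambda_n k(\delta-o(1))\to\infty$ against the uniform bound; only then does it deduce norm convergence from $\langle I_{\lambda_n}'(u_n),u_n\rangle=\langle I_{\lambda_n}'(u_n),\tilde u\rangle=0$. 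You instead test $I_\lambda'(u_\lambda)=0$ directly against $u_\lambda-u_0$, kill the cross term $\lambda\int(Lu_\lambda,u_0)$ using $Lu_0\equiv 0$, and use the fact that a sum of two asymptotically nonnegative quantities tending to zero forces both to vanish; the $L^2$-tail is then handled by the $\{l\ge k\}$ versus finite-measure $\{l<k\}$ splitting. Your route avoids the vanishing lemma entirely, is more self-contained, and has the bonus that $\lambda\int(Lu_\lambda,u_\lambda)\to 0$ comes out explicitly, which makes the energy convergence $I_\lambda(u_\lambda)\to I_0(\tilde u)$ used for nontriviality immediate; the paper's route yields strong $L^p(\mathbb{R})$ convergence for every $p\in[2,\infty)$ as a byproduct.

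Two small repairs. First, your intermediate inequality $\|u\|_{L^2(\{l<k\})}\le C_\alpha|\{l<k\}|^{1/2}\|D^\alpha u\|_{L^2}$ is false as stated: the Sobolev estimate (\ref{FDEeq06}) controls $\|u\|_\infty$ by the \emph{full} $H^\alpha$ norm, and the homogeneous version fails by scaling ($u(\cdot/\sigma)$ has constant sup-norm while its $\alpha$-seminorm decays like $\sigma^{1/2-\alpha}$). What you actually need is exactly the paper's Lemmas \ref{lmx1} and \ref{lmx2}, where the $\|u\|_{L^2}^2$ term on the right is absorbed using the hypothesis $C_\alpha^2|\{l<k\}|<1$; your conclusion $\|u\|_{H^\alpha}^2\le C\|u\|_\lambda^2$ is precisely that statement, so nothing downstream breaks. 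Second, weak lower semicontinuity only gives $u_0=0$ a.e.\ on $\{l>0\}=\mathbb{R}\setminus\overline{\mathbb{J}}$, not on $\mathbb{R}\setminus\overline{\mathbb{I}}$ as you assert; your use of $Lu_0\equiv 0$ survives regardless, since $\int(Lu_0,u_0)=0$ together with positive semidefiniteness of $L(t)$ already yields $L(t)u_0(t)=0$ a.e., and the remaining identification of the support with $\overline{\mathbb{I}}$ is an imprecision the paper itself commits when it places $\tilde u$ in $E_0^\alpha$ ``by $(L_2)$''.
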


The rest of the paper is organized as follows: In section \S 2, we describe the Liouville-Weyl fractional calculus and we introduce the fractional space that we use in our work and some proposition are proven which will aid in our analysis. In section \S 3, we prove Theorem \ref{main1}. Finally in section \S 4 we prove Theorem \ref{main2}.


\section{Preliminary Results}

\subsection{Liouville-Weyl Fractional Calculus}

We first introduce some basic definitions of fractional calculus. The Liouville-Weyl fractional integrals of order $0<\alpha < 1$ are defined as
\begin{equation}\label{LWeq01}
_{-\infty}I_{x}^{\alpha}u(x) = \frac{1}{\Gamma (\alpha)} \int_{-\infty}^{x}(x-\xi)^{\alpha - 1}u(\xi)d\xi,
\end{equation}
and 
\begin{equation}\label{LWeq02}
_{x}I_{\infty}^{\alpha}u(x) = \frac{1}{\Gamma (\alpha)} \int_{x}^{\infty}(\xi - x)^{\alpha - 1}u(\xi)d\xi.
\end{equation}
The Liouville-Weyl fractional derivative of order $0<\alpha <1$ are defined as the left-inverse operators of the corresponding Liouville-Weyl fractional integrals
\begin{equation}\label{LWeq03}
_{-\infty}D_{x}^{\alpha}u(x) = \frac{d }{d x} {_{-\infty}}I_{x}^{1-\alpha}u(x),
\end{equation}
and
\begin{equation}\label{LWeq04}
_{x}D_{\infty}^{\alpha}u(x) = -\frac{d }{d x} {_{x}}I_{\infty}^{1-\alpha}u(x).
\end{equation}

\noindent
Recall that the Fourier transform $\widehat{u}(w)$ of $u(x)$ is defined by
$$
\widehat{u}(w) = \int_{-\infty}^{\infty} e^{-ix.w}u(x)dx.
$$
Let $u(x)$ be defined on $(-\infty, \infty)$. Then the Fourier transform of the Liouville-Weyl integral and differential operator satisfies
\begin{equation}\label{LWeq05}
\widehat{ _{-\infty}I_{x}^{\alpha}u(x)}(w) = (iw)^{-\alpha}\widehat{u}(w),\;\; \widehat{ _{x}I_{\infty}^{\alpha}u(x)}(w) = (-iw)^{-\alpha}\widehat{u}(w),
\end{equation}
\begin{equation}\label{LWeq06}
\widehat{ _{-\infty}D_{x}^{\alpha}u(x)}(w) = (iw)^{\alpha}\widehat{u}(w),\;\;\widehat{ _{x}D_{\infty}^{\alpha}u(x)}(w) = (-iw)^{\alpha}\widehat{u}(w).
\end{equation}
\subsection{Fractional Derivative Space}

Our aim is to establish a variational structure that enables us to reduce the existence of solutions of (\ref{01}) to finding critical points of the corresponding functional, and it is necessary to construct appropriate function spaces.

We first introduce some fractional-index spaces. Denote by $L^{p}(\mathbb{R})$, $p\in [2,+\infty]$, the Banach space of functions on $\mathbb{R}$ with values in $\mathbb{R}$ with the norm
$$
\|u\|_{L^p} = \left( \int_{\mathbb{R}} |u(t)|^pdt\right)^{1/p},
$$
and $L^{\infty}(\mathbb{R})$ is the Banach space of essentially bounded functions from $\mathbb{R}$ into $\mathbb{R}$ equipped with the norm
$$
\|u\|_{\infty} :=ess\sup\{|u(t)|:t\in \mathbb{R}\}.
$$

Let $0< \alpha \leq 1$ and $1<p<\infty$. The fractional derivative space $E_{0}^{\alpha ,p}$ is defined by the closure of $C_{0}^{\infty}([0,T], \mathbb{R}^n)$ with respect to the norm
$$
\|u\|_{\alpha ,p} = \left(\int_{0}^{T} |u(t)|^pdt + \int_{0}^{T}|{_{0}}D_{t}^{\alpha}u(t)|^pdt  \right)^{1/p}, \;\;\forall\; u\in E_{0}^{\alpha ,p}.
$$
This space can be characterized by $E_{0}^{\alpha , p} = \{u\in L^{p}([0,T], \mathbb{R}^n)/\;\; {_{0}}D_{t}^{\alpha}u \in L^{p}([0,T], \mathbb{R}^n)\;\mbox{and}\;u(0) = u(T) = 0\}$. Moreover $(E_{0}^{\alpha ,p}, \|.\|_{\alpha ,p})$ is a reflexive and separable Banach space.

\noindent
For $\alpha > 0$, define the semi-norm
$
|u|_{I_{-\infty}^{\alpha}} = \|_{-\infty}D_{x}^{\alpha}u\|_{L^{2}},
$
and norm
\begin{equation}\label{FDEeq01}
\|u\|_{I_{-\infty}^{\alpha}} = \left( \|u\|_{L^{2}}^{2} + |u|_{I_{-\infty}^{\alpha}}^{2} \right)^{1/2},
\end{equation}
and let
$$
I_{-\infty}^{\alpha} (\mathbb{R}) = \overline{C_{0}^{\infty}(\mathbb{R}, \mathbb{R}^n)}^{\|.\|_{I_{-\infty}^{\alpha}}},
$$
where $C_{0}^{\infty}(\mathbb{R}, \mathbb{R}^n)$ denotes the space of infinitely differentiable functions from $\mathbb{R}$ into $\mathbb{R}^n$ with vanishing property at infinity.

Now we can define the fractional Sobolev space $H^{\alpha}(\mathbb{R}, \mathbb{R}^n)$ in terms of the Fourier transform. Choose $0< \alpha < 1$, define the semi-norm
\begin{equation}\label{FDEeq02}
|u|_{\alpha} = \||w|^{\alpha}\widehat{u}\|_{L^{2}},
\end{equation}
and the norm
$$
\|u\|_{\alpha} = \left( \|u\|_{L^{2}}^{2} + |u|_{\alpha}^{2} \right)^{1/2},
$$
and let
$$
H^{\alpha}(\mathbb{R}) = \overline{C_{0}^{\infty}(\mathbb{R}, \mathbb{R}^n)}^{\|.\|_{\alpha}}.
$$
Moreover, we note a function $u\in L^{2}(\mathbb{R}, \mathbb{R}^n)$ belongs to $I_{-\infty}^{\alpha}(\mathbb{R}, \mathbb{R}^n)$ if and only if $|w|^{\alpha}\widehat{u} \in L^{2}.$
We have
\begin{equation}\label{FDEeq04}
|u|_{I_{-\infty}^{\alpha}} = \||w|^{\alpha}\widehat{u}\|_{L^{2}}.
\end{equation}
Therefore $I_{-\infty}^{\alpha}(\mathbb{R}, \mathbb{R}^n)$ and $H^{\alpha}(\mathbb{R}, \mathbb{R}^n)$ are equivalent with equivalent semi-norm and norm. 

Analogous to $I_{-\infty}^{\alpha}(\mathbb{R}, \mathbb{R}^n)$ we introduce $I_{\infty}^{\alpha}(\mathbb{R}, \mathbb{R}^n)$. Let the semi-norm $|u|_{I_{\infty}^{\alpha}} = \|_{x}D_{\infty}^{\alpha}u\|_{L^{2}},$
and the norm
\begin{equation}\label{FDEeq05}
\|u\|_{I_{\infty}^{\alpha}} = \left( \|u\|_{L^{2}}^{2} + |u|_{I_{\infty}^{\alpha}}^{2} \right)^{1/2},
\end{equation}
and let
$$
I_{\infty}^{\alpha}(\mathbb{R}) = \overline{C_{0}^{\infty}(\mathbb{R},\mathbb{R}^n)}^{\|.\|_{I_{\infty}^{\alpha}}}.
$$
Moreover $I_{-\infty}^{\alpha}(\mathbb{R},\mathbb{R}^n)$ and $I_{\infty}^{\alpha}(\mathbb{R}, \mathbb{R}^n)$ are equivalent, with equivalent semi-norm and norm.

Let $C(\mathbb{R},\mathbb{R}^n)$ denote the space of continuous functions from $\mathbb{R}$ into $\mathbb{R}^n$. Then we obtain the following Sobolev lemma.
\begin{Thm}\label{FDEtm01}
\cite{CT} If $\alpha > \frac{1}{2}$, then $H^{\alpha}(\mathbb{R}, \mathbb{R}^n) \subset C(\mathbb{R}, \mathbb{R}^n)$ and there is a positive constant $C_{\alpha}$ such that
\begin{equation}\label{FDEeq06}
\|u\|_{\infty} \leq C_{\alpha} \|u\|_{\alpha}.
\end{equation}
\end{Thm}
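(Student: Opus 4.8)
The plan is to prove the embedding through the Fourier characterization of $H^{\alpha}(\mathbb{R}, \mathbb{R}^n)$, exploiting that the hypothesis $\alpha > 1/2$ makes a certain weight integrable. Since $C_0^{\infty}(\mathbb{R}, \mathbb{R}^n)$ is dense in $H^{\alpha}(\mathbb{R}, \mathbb{R}^n)$ by definition of the space, it suffices to establish the inequality \eqref{FDEeq06} for $u \in C_0^{\infty}(\mathbb{R}, \mathbb{R}^n)$ and then pass to the limit.

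First, for such $u$ I would write the Fourier inversion formula $u(x) = \frac{1}{2\pi}\int_{\mathbb{R}} e^{ixw}\widehat{u}(w)\,dw$ and estimate pointwise
\[
|u(x)| \leq \frac{1}{2\pi}\int_{\mathbb{R}} |\widehat{u}(w)|\,dw = \frac{1}{2\pi}\int_{\mathbb{R}} (1+|w|^2)^{-\alpha/2}(1+|w|^2)^{\alpha/2}|\widehat{u}(w)|\,dw.
\]
Applying the Cauchy--Schwarz inequality then gives
\[
|u(x)| \leq \frac{1}{2\pi}\left(\int_{\mathbb{R}}(1+|w|^2)^{-\alpha}\,dw\right)^{1/2}\left(\int_{\mathbb{R}}(1+|w|^2)^{\alpha}|\widehat{u}(w)|^2\,dw\right)^{1/2}.
\]

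The decisive step, and what I expect to be the heart of the argument, is the convergence of the first integral. Since $(1+|w|^2)^{-\alpha} \sim |w|^{-2\alpha}$ as $|w| \to \infty$, the integral $\int_{\mathbb{R}}(1+|w|^2)^{-\alpha}\,dw$ is finite precisely when $2\alpha > 1$, that is, when $\alpha > 1/2$; this is exactly the hypothesis, and it is the only place where it is genuinely used. Denote the resulting finite value by $K_\alpha$. For the second factor I would invoke the elementary equivalence $(1+|w|^2)^{\alpha} \leq C(1 + |w|^{2\alpha})$ together with Plancherel's identity, which identifies $\|\widehat{u}\|_{L^2}^2$ with a multiple of $\|u\|_{L^2}^2$ and $\||w|^{\alpha}\widehat{u}\|_{L^2}^2$ with a multiple of $|u|_{\alpha}^2$ (recall \eqref{FDEeq02}). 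Hence the second factor is bounded by a constant times $\|u\|_{\alpha}$, and combining the two factors yields $\|u\|_{\infty} \leq C_{\alpha}\|u\|_{\alpha}$ with $C_{\alpha}$ depending only on $\alpha$.

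Finally, for the continuity claim I would observe that the Cauchy--Schwarz estimate above already shows $\widehat{u} \in L^1(\mathbb{R})$ whenever $\alpha > 1/2$, with $\|\widehat{u}\|_{L^1}$ controlled by $\|u\|_{\alpha}$. For a general $u \in H^{\alpha}(\mathbb{R},\mathbb{R}^n)$, approximate by $u_k \in C_0^{\infty}$; the uniform bound shows $u_k \to u$ in $L^{\infty}$, so $u$ is a uniform limit of continuous functions and therefore itself lies in $C(\mathbb{R}, \mathbb{R}^n)$. Alternatively, since $\widehat{u} \in L^1$, dominated convergence applied to the inversion formula gives continuity directly. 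The only delicate point worth checking is the precise normalization constants in Plancherel's theorem and in the inversion formula under the convention fixed in the excerpt, but these affect only the explicit value of $C_{\alpha}$ and not the structure of the proof.
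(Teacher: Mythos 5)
Your proof is correct: the pointwise Fourier--inversion bound, Cauchy--Schwarz against the weight $(1+|w|^2)^{-\alpha}$ (which is integrable exactly when $\alpha>1/2$), the elementary comparison $(1+|w|^2)^{\alpha}\leq C(1+|w|^{2\alpha})$ with Plancherel, and the density-plus-uniform-limit argument together establish both the inequality (\ref{FDEeq06}) and the inclusion $H^{\alpha}(\mathbb{R},\mathbb{R}^n)\subset C(\mathbb{R},\mathbb{R}^n)$. The paper itself gives no proof of this theorem---it is imported verbatim from the reference \cite{CT}---and your argument is precisely the standard Fourier-analytic one used there, so your proposal matches the intended proof.
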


In what follows, we introduce the fractional space in which we will construct the variational framework of (\ref{01}). Let
$$
X^{\alpha} = \left\{ u\in H^{\alpha}(\mathbb{R}, \mathbb{R}^{n})|\;\;\int_{\mathbb{R}} \left[|_{-\infty}D_{t}^{\alpha}u(t)|^{2} + (L(t)u(t), u(t))\right] dt < \infty  \right\},
$$
then $X^{\alpha}$ is a reflexive and separable Hilbert space with the inner product
$$
\langle u,v \rangle_{X^{\alpha}} = \int_{\mathbb{R}} \left[ _{-\infty}D_{t}^{\alpha}u(t) \cdot{_{-\infty}}D_{t}^{\alpha}v(t)+ (L(t)u(t),v(t))\right]dt
$$
and the corresponding norm
$$
\|u\|_{X^{\alpha}}^{2} = \langle u,u \rangle_{X^{\alpha}}.
$$
For $\lambda>0$, we also need the following inner product  
$$
\langle u,v \rangle_{\lambda} = \int_{\mathbb{R}} \left[ _{-\infty}D_{t}^{\alpha}u(t) \cdot{_{-\infty}}D_{t}^{\alpha}v(t)+ \lambda (L(t)u(t), v(t))\right]dt,
$$
and the corresponding norm $\|u\|_{\lambda}^2 = \langle u,u \rangle_{\lambda}$. It is clear that $\|u\|_{X^\alpha} \leq \|u\|_{\lambda}$ for $\lambda \geq 1$. Set $X_{\lambda} = (X^{\alpha}, \|.\|_{\lambda})$.

\begin{Lem}\label{lmx1}
Suppose that $(L_1)-(L_2)$ hold. Then, the embedding $X^{\alpha} \hookrightarrow H^{\alpha}(\mathbb{R}, \mathbb{R}^n)$ is continuous.
\end{Lem}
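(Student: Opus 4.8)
The plan is to produce a constant $C>0$ with $\|u\|_{\alpha}\le C\,\|u\|_{X^{\alpha}}$ for every $u\in X^{\alpha}$, which is exactly what continuity of the embedding requires. Using the Fourier identity \equ{FDEeq04}, I would first record that the two descriptions of the $H^{\alpha}$-norm coincide, so that
$$
\|u\|_{\alpha}^{2} = \|u\|_{L^{2}}^{2} + \|{_{-\infty}}D_{t}^{\alpha}u\|_{L^{2}}^{2}.
$$
The derivative term comes for free: since $(L(t)u,u)\ge 0$ by $(L_1)$, the quantity $\|{_{-\infty}}D_{t}^{\alpha}u\|_{L^{2}}^{2}$ is dominated by $\|u\|_{X^{\alpha}}^{2}$. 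Hence the whole matter reduces to estimating $\|u\|_{L^{2}}^{2}$ in terms of $\|u\|_{X^{\alpha}}^{2}$, and here the obstacle is precisely that $L$ is only positive \emph{semi}-definite, so coercivity cannot be used on all of $\mathbb{R}$.

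To circumvent this I would split $\mathbb{R}$ along the threshold $k$ from $(L_1)$, writing $\mathbb{R}=\{l\ge k\}\cup\{l<k\}$. On $\{l\ge k\}$ the pointwise bound $(L(t)u,u)\ge l(t)|u|^{2}\ge k|u|^{2}$ gives
$$
\int_{\{l\ge k\}}|u|^{2}\,dt \le \frac{1}{k}\int_{\mathbb{R}}(L(t)u,u)\,dt \le \frac{1}{k}\,\|u\|_{X^{\alpha}}^{2}.
$$
On the complementary set $\{l<k\}$, which is where coercivity fails (it contains $\mathbb{I}$, where $L\equiv 0$), I would instead invoke the Sobolev embedding of Theorem \ref{FDEtm01}: since $\|u\|_{\infty}\le C_{\alpha}\|u\|_{\alpha}$ and $\{l<k\}$ has finite measure,
$$
\int_{\{l<k\}}|u|^{2}\,dt \le \|u\|_{\infty}^{2}\,|\{l<k\}| \le C_{\alpha}^{2}\,|\{l<k\}|\,\|u\|_{\alpha}^{2}.
$$

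Adding the two pieces and again using $\|{_{-\infty}}D_{t}^{\alpha}u\|_{L^{2}}^{2}\le\|u\|_{X^{\alpha}}^{2}$ yields
$$
\|u\|_{\alpha}^{2} \le \Bigl(1+\tfrac{1}{k}\Bigr)\|u\|_{X^{\alpha}}^{2} + C_{\alpha}^{2}\,|\{l<k\}|\,\|u\|_{\alpha}^{2}.
$$
The crux is the final absorption, which is exactly what the smallness hypothesis $C_{\alpha}^{2}|\{l<k\}|<1$ in $(L_1)$ is designed for: since $1-C_{\alpha}^{2}|\{l<k\}|>0$, I may move the last term to the left and divide, obtaining
$$
\|u\|_{\alpha}^{2} \le \frac{1+1/k}{\,1-C_{\alpha}^{2}|\{l<k\}|\,}\,\|u\|_{X^{\alpha}}^{2},
$$
which is the asserted continuity with $C^{2}=(1+1/k)/(1-C_{\alpha}^{2}|\{l<k\}|)$. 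The only points worth checking are that $\{l<k\}$ is measurable of finite measure, both immediate from the continuity of $l$ and the bound $C_{\alpha}^{2}|\{l<k\}|<1$ in $(L_1)$; assumption $(L_2)$ serves only to confine the degeneracy region to a bounded interval and plays no active role in this particular estimate. I expect the genuine difficulty to be conceptual rather than computational, namely recognizing that the loss of coercivity on $\{l<k\}$ can be compensated by the $L^{\infty}$-control from the Sobolev embedding together with the measure-smallness condition, so that the bad term can be absorbed.
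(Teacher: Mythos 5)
Your proof is correct and follows essentially the same route as the paper: the same decomposition $\mathbb{R}=\{l<k\}\cup\{l\ge k\}$, the Sobolev bound $\|u\|_{\infty}\le C_{\alpha}\|u\|_{\alpha}$ on the small set, the coercivity $l\ge k$ on its complement, and absorption via $C_{\alpha}^{2}|\{l<k\}|<1$. The only (cosmetic) difference is that you absorb the bad term at the level of the full norm $\|u\|_{\alpha}^{2}$, whereas the paper first absorbs it in $\|u\|_{L^{2}}^{2}$ and then adds the derivative term, which merely produces a slightly different admissible constant; your side remark that $(L_2)$ is not actually needed in this estimate is also accurate.
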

\begin{proof}
By $(L_1)-(L_2)$ and (\ref{FDEeq06}), we have
\begin{eqnarray*}
&&\int_{\mathbb{R}}|u(t)|^2dt = \int_{\{l<k\}}|u(t)|^2dt + \int_{\{l\geq k\}} |u(t)|^2dt \\
&&\leq \|u\|_{\infty}^{2}|\{l<k\}| + \frac{1}{k}\int_{\mathbb{R}}l(t)|u(t)|^2dt\\
&&\leq C_{\alpha}^{2}|\{l<k\}|\left(\int_{\mathbb{R}}(|{_{-\infty}}D_{t}^{\alpha}u(t)|^2 + |u(t)|^2) dt  \right) + \frac{1}{k}\int_{\mathbb{R}}(L(t)u(t), u(t))dt. 
\end{eqnarray*}
Therefore
\begin{equation}\label{X02}
\|u\|_{L^2}^{2} \leq \frac{\max\{C_{\alpha}^2|\{l<k\}|, \frac{1}{k}\}}{1-C_{\alpha}^{2}|\{l<k\}|}\|u\|_{X^\alpha}^{2}.
\end{equation}
Then, by (\ref{X02}) we get
\begin{equation}\label{X03}
\|u\|_{\alpha}^{2} \leq \left( 1 + \frac{\max\{C_{\alpha}^{2}|\{l<k\}|, \frac{1}{k}\}}{1-C_{\alpha}^{2}|\{l<k\}|} \right)\|u\|_{X^\alpha}^{2},
\end{equation}
which implies that the embedding $X^{\alpha}  \hookrightarrow  H^{\alpha}(\mathbb{R})$ is continuous . 
\end{proof}

\begin{Lem}\label{lmx2}
Suppose that $(L_1)-(L_2)$ hold. Then, there exists $\Lambda >0$ such that, for all $\lambda \geq \Lambda$,  the embedding $X_{\lambda} \hookrightarrow L^{r}(\mathbb{R}, \mathbb{R}^n)$ is continuous for all $2\leq r < \infty$. 
\end{Lem}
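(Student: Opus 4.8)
The plan is to produce a continuous embedding whose constant is \emph{uniform} in $\lambda$ once $\lambda$ exceeds a threshold $\Lambda$, and then to pass from $L^2$ to a general $L^r$ by interpolating against $L^\infty$. The starting point is a $\lambda$-robust refinement of the $L^2$ estimate already used in the proof of Lemma \ref{lmx1}.

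First I would split $\mathbb{R} = \{l<k\} \cup \{l\ge k\}$ exactly as in Lemma \ref{lmx1}. On $\{l\ge k\}$ I would estimate $|u(t)|^2 \le \tfrac{1}{k}\, l(t)|u(t)|^2 \le \tfrac{1}{\lambda k}\,\lambda\,(L(t)u(t),u(t))$, so that this piece is controlled by $\tfrac{1}{\lambda k}\|u\|_\lambda^2$; the extra factor $\lambda^{-1}$ is precisely what the semi-definite term buys us. On $\{l<k\}$ I would use $\|u\|_\infty \le C_\alpha\|u\|_\alpha$ from (\ref{FDEeq06}) together with $|u|_\alpha^2 = \int_{\mathbb{R}}|{_{-\infty}}D_t^\alpha u|^2\,dt \le \|u\|_\lambda^2$ (valid for every $\lambda>0$ since $(L(t)u,u)\ge 0$), giving $\int_{\{l<k\}}|u|^2\,dt \le C_\alpha^2|\{l<k\}|\bigl(\|u\|_{L^2}^2 + \|u\|_\lambda^2\bigr)$. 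Adding the two pieces and absorbing the $\|u\|_{L^2}^2$ term on the left --- legitimate because $C_\alpha^2|\{l<k\}|<1$ by $(L_1)$ --- would yield
\[
\|u\|_{L^2}^2 \le \frac{C_\alpha^2|\{l<k\}| + (\lambda k)^{-1}}{1 - C_\alpha^2|\{l<k\}|}\,\|u\|_\lambda^2 ,
\]
and for every $\lambda \ge \Lambda := 1$ the numerator is bounded by $C_\alpha^2|\{l<k\}| + k^{-1}$, so the resulting constant $C_0$ is independent of $\lambda$.

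Next I would upgrade this to an $H^\alpha$ bound. Since $|u|_\alpha^2 \le \|u\|_\lambda^2$, the estimate above gives $\|u\|_\alpha^2 = \|u\|_{L^2}^2 + |u|_\alpha^2 \le (1+C_0)\|u\|_\lambda^2$, hence $X_\lambda \hookrightarrow H^\alpha(\mathbb{R})$ continuously and, through (\ref{FDEeq06}), $\|u\|_\infty \le C_\alpha\|u\|_\alpha \le C_\alpha(1+C_0)^{1/2}\|u\|_\lambda$. Finally, for any $2\le r<\infty$ I would interpolate via $\|u\|_{L^r}^r = \int_{\mathbb{R}}|u|^{r-2}|u|^2\,dt \le \|u\|_\infty^{r-2}\|u\|_{L^2}^2$ and insert the two bounds just obtained, which produces $\|u\|_{L^r} \le C(r,\alpha)\|u\|_\lambda$ with $C(r,\alpha)$ independent of $\lambda \ge \Lambda$, establishing the claim.

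The only genuinely delicate step is the first one: everything hinges on using the positive semi-definiteness of $L$ to replace $l(t)|u|^2$ by $\lambda\,(L(t)u,u)$ on $\{l\ge k\}$, and on the smallness hypothesis $C_\alpha^2|\{l<k\}|<1$, which is exactly what permits absorbing $\|u\|_{L^2}^2$ on the left. The interpolation and the Sobolev step are routine. I expect the same computation to show, as a byproduct, that the $L^2$ constant decreases toward $C_\alpha^2|\{l<k\}|/(1-C_\alpha^2|\{l<k\}|)$ as $\lambda\to\infty$, a uniform control that will be useful for the concentration analysis in Theorem \ref{main2}.
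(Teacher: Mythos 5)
Your proposal is correct and takes essentially the same route as the paper: the same decomposition of $\mathbb{R}$ into $\{l<k\}$ and $\{l\ge k\}$, the same use of the factor $\lambda$ in the semi-definite term on $\{l\ge k\}$, the same absorption of $\|u\|_{L^2}^2$ via the hypothesis $C_\alpha^2|\{l<k\}|<1$, and the same $L^\infty$--$L^2$ interpolation to reach $L^r$ for $r>2$. The only difference is the choice of threshold --- you take $\Lambda = 1$ and bound the resulting constant uniformly, while the paper takes $\Lambda = \frac{1}{kC_\alpha^2|\{l<k\}|}$ so that the $\{l\ge k\}$ contribution is dominated by $C_\alpha^2|\{l<k\}|\,\|u\|_\lambda^2$, yielding the cleaner constant $1/\theta_0$; this affects only the numerical constants, not the argument.
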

\begin{proof}
Let $\Lambda = \frac{1}{kC_{\alpha}^{2}|\{L<k\}|}$. Using the same ideas of the proof of Lemma \ref{lmx1}, for 
all $\lambda \geq \Lambda$ we also obtain
\begin{equation}\label{X04}
\|u\|_{L^2}^{2} \leq \frac{C_{\alpha}^{2}|\{L<k\}|}{1-C_{\alpha}^{2}|\{L<k\}|}\|u\|_{\lambda}^{2} = \frac{1}{\theta_0}\|u\|_{\lambda}^{2},
\end{equation}
where $\theta_0 = \frac{1-C_{\alpha}^{2}|\{L<k\}|}{C_{\alpha}^{2}|\{L<k\}|}$. Furthermore, using (\ref{X04}), for each $r\in (2,\infty)$ and $\lambda \geq \Lambda$ we have
\begin{eqnarray*}
\int_{\mathbb{R}}|u(t)|^{r}dt &\leq& \|u\|_{\infty}^{r-2}\int_{\mathbb{R}}|u(t)|^2dt\\
&\leq&C_{\alpha}^{r-2}\left( \int_{\mathbb{R}} |{_{-\infty}}D_{t}^{\infty}u(t)|^2 + |u(t)|^2dt \right)^{\frac{r-2}{2}}\frac{C_{\alpha}^{2}|\{l<k\}|}{1-C_{\alpha}^{2}|\{l<k\}|}\|u\|_{\lambda}^{2}\\
&\leq& \frac{1}{\theta_{0}^{r/2}|\{l<k\}|^{\frac{r-2}{2}}} \|u\|_{\lambda}^{r}.
\end{eqnarray*} 
Therefore, for all $r\in (2,\infty)$
\begin{equation}\label{X05}
\|u\|_{L^r}^{r} \leq \frac{1}{\theta_{0}^{r/2}|\{L<k\}|^{\frac{r-2}{2}}}\|u\|_{\lambda}^{r}.
\end{equation}
\end{proof}

In order to prove Theorem \ref{main1}, we use the following result by Rabinowitz \cite{PR}

\begin{Lem}\label{Rlem}
Let $\mathfrak{E}$ be a real Banach space and $\Phi \in C^{1}(\mathfrak{E}, \mathbb{R})$ satisfy the (PS)-condition. If $\Phi$ is bounded from below, then $c = \inf_{\mathfrak{E}}\Phi$ is a critical value of $\Phi$.
\end{Lem}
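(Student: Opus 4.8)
The plan is to produce a minimizing sequence for $\Phi$ that is simultaneously a Palais--Smale sequence, and then to invoke the (PS)-condition to extract a limit realizing the infimum as a critical value. Since $\Phi$ is bounded from below, the number $c = \inf_{\mathfrak{E}}\Phi$ is finite, so the statement is at least meaningful, and the whole task reduces to exhibiting a point where both $\Phi = c$ and $\Phi' = 0$.

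The central tool I would use is Ekeland's variational principle. For each $n \in \mathbb{N}$, applying it with $\varepsilon = 1/n$ produces a point $u_n \in \mathfrak{E}$ satisfying $\Phi(u_n) \le c + 1/n$ together with the almost-minimality estimate $\Phi(v) \ge \Phi(u_n) - \frac{1}{n}\|v - u_n\|$ for every $v \in \mathfrak{E}$. The first inequality immediately gives $\Phi(u_n) \to c$, so it only remains to control the derivative along this sequence.

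The key step, and the one requiring the most care, is to convert the almost-minimality estimate into the gradient bound $\|\Phi'(u_n)\| \le 1/n$. I would fix a unit vector $h \in \mathfrak{E}$, set $v = u_n + th$ with $t > 0$, and divide the estimate by $t$ to get $\frac{\Phi(u_n + th) - \Phi(u_n)}{t} \ge -\frac{1}{n}$; letting $t \to 0^{+}$ and using that $\Phi$ is $C^{1}$ (hence Gâteaux differentiable) yields $\langle \Phi'(u_n), h\rangle \ge -1/n$. Replacing $h$ by $-h$ and taking the supremum over all unit vectors then gives $\|\Phi'(u_n)\| \le 1/n \to 0$. Thus $\{u_n\}$ is a Palais--Smale sequence at the level $c$.

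Finally, the (PS)-condition furnishes a subsequence $u_{n_k} \to u^{*}$ in $\mathfrak{E}$. Continuity of $\Phi$ forces $\Phi(u^{*}) = c$, while continuity of $\Phi'$ (part of the $C^{1}$ hypothesis) forces $\Phi'(u^{*}) = 0$. Hence $u^{*}$ is a critical point of $\Phi$ with critical value $c$, which proves the claim. The only genuine subtlety is the differentiability argument in the previous paragraph; everything else is a direct application of Ekeland's principle and of the compactness encoded in the (PS)-condition.
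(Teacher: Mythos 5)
Your proof is correct, but note that the paper itself offers no proof of this lemma at all: it is quoted verbatim from Rabinowitz's monograph \cite{PR}, so there is no argument in the paper to compare against step by step. Your Ekeland-based argument is a complete and standard substitute. Each ingredient checks out: a Banach space is complete and a $C^{1}$ functional is lower semicontinuous, so Ekeland's variational principle applies and yields, for each $n$, a point $u_{n}$ with $\Phi(u_{n}) \le c + 1/n$ and $\Phi(v) \ge \Phi(u_{n}) - \tfrac{1}{n}\|v-u_{n}\|$ for all $v$; your directional-derivative argument (test with $v = u_{n} \pm t h$, divide by $t$, let $t \to 0^{+}$, take the supremum over unit vectors $h$) correctly converts the second inequality into $\|\Phi'(u_{n})\| \le 1/n$, so $\{u_{n}\}$ is a Palais--Smale sequence at level $c$; the (PS)-condition then gives $u_{n_{k}} \to u^{*}$, and continuity of $\Phi$ and $\Phi'$ gives $\Phi(u^{*}) = c$, $\Phi'(u^{*}) = 0$. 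For comparison, the classical route in Rabinowitz's book goes through the deformation theorem: if the critical set at level $c$ were empty, one could deform the nonempty sublevel set $\{\Phi \le c + \varepsilon\}$ into $\{\Phi \le c - \varepsilon\}$, which is empty since $c$ is the infimum---a contradiction. The deformation approach requires the heavier machinery of pseudo-gradient flows but fits into the general minimax framework of that book, whereas your Ekeland argument is shorter, more elementary, and works under the same hypotheses; for this particular statement it is arguably the cleaner proof.
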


\section{Proof of Theorem \ref{main1}}
It is well known that (\ref{01}) is variational and its solutions are the critical points of the functional $I_{\lambda}$ defined in $X_{\lambda}$ by
$$
I_{\lambda}(u) = \frac{1}{2}\|u\|_{\lambda}^{2} - \int_{\mathbb{R}}W(t,u)dt.
$$
Furthermore, it is easy to prove that the functional $I_{\lambda}$ is of class $C^1$ in $X_{\lambda}$, and that  
$$
I'_{\lambda}(u)\varphi = \int_{\mathbb{R}} [{_{-\infty}}D_{t}^{\alpha}u \cdot {_{-\infty}}D_{t}^{\alpha}\varphi + \lambda (L(t)u(t), \varphi (t)) ]dt - \int_{\mathbb{R}}(\nabla W(t,u), \varphi) dt
$$

First, we give some useful lemmas.
\begin{Lem}\label{elm1}
Assume that $(L_1)-(L_2)$, $(W_1)-(W_2)$ hold. Then, for all $\lambda \geq \Lambda$, $I_{\lambda}$ is bounded from below in $X_\lambda$
\end{Lem}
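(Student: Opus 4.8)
The plan is to show that the negative term $\int_{\mathbb{R}} W(t,u)\,dt$ grows strictly more slowly in $\|\cdot\|_\lambda$ than the quadratic leading term $\tfrac12\|u\|_\lambda^2$, so that $I_\lambda$ stays above a fixed continuous function of $\|u\|_\lambda$ that is itself bounded below. First I would upgrade the gradient bound $(W_1)$ to a pointwise growth bound on $W$ itself. Integrating along the segment $s\mapsto su$, $s\in[0,1]$, and using $(W_1)$ together with $W(t,0)=0$,
\begin{equation*}
|W(t,u)| = \left| \int_0^1 (\nabla W(t,su), u)\,ds \right| \leq \int_0^1 \xi(t)\,|su|^{p-1}|u|\,ds = \frac{\xi(t)}{p}|u|^p .
\end{equation*}

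Next I would integrate over $\mathbb{R}$ and apply H\"older's inequality with the conjugate exponents $\tfrac{2}{p}$ and $\tfrac{2}{2-p}$, which are matched exactly to the integrability $\xi\in L^{\frac{2}{2-p}}(\mathbb{R})$ demanded in $(W_1)$:
\begin{equation*}
\int_{\mathbb{R}} |W(t,u)|\,dt \leq \frac{1}{p}\int_{\mathbb{R}} \xi(t)\,|u|^p\,dt \leq \frac{1}{p}\,\|\xi\|_{L^{\frac{2}{2-p}}}\,\|u\|_{L^2}^p .
\end{equation*}
For $\lambda\geq\Lambda$ I would then invoke the embedding estimate (\ref{X04}) from Lemma \ref{lmx2}, namely $\|u\|_{L^2}^2\leq \theta_0^{-1}\|u\|_\lambda^2$, to replace the $L^2$ norm by the $\|\cdot\|_\lambda$ norm, which yields
\begin{equation*}
I_\lambda(u) \geq \frac{1}{2}\|u\|_\lambda^2 - \frac{1}{p}\,\|\xi\|_{L^{\frac{2}{2-p}}}\,\theta_0^{-p/2}\,\|u\|_\lambda^p =: g\bigl(\|u\|_\lambda\bigr).
\end{equation*}

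Finally, since $p\in(1,2)$, the scalar function $g(s)=\tfrac12 s^2 - C s^p$ is continuous on $[0,\infty)$ and tends to $+\infty$ as $s\to\infty$, hence is bounded from below; this provides a uniform lower bound for $I_\lambda$ on all of $X_\lambda$, completing the argument. The computation is essentially routine, so there is no deep obstacle; the only points requiring care are the passage from $(W_1)$ to the growth bound on $W$ (which relies on $W(t,0)=0$ so that the segment integration is legitimate) and the precise matching of the H\"older exponents to the hypothesis $\xi\in L^{\frac{2}{2-p}}$. The subquadratic exponent $p<2$ is the structural reason the negative term is of lower order, and it is exactly what forces boundedness from below to hold simultaneously for every $\lambda\geq\Lambda$.
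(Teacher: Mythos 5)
Your proof is correct and follows essentially the same route as the paper: bound $|W(t,u)|$ by $\tfrac{1}{p}\xi(t)|u|^p$ using $(W_1)$, apply H\"older with exponents $\tfrac{2}{p}$ and $\tfrac{2}{2-p}$, invoke the embedding estimate (\ref{X04}) to pass to $\|u\|_\lambda$, and conclude coercivity (hence boundedness from below) from $p<2$. The only difference is that you make explicit the segment-integration step from $(W_1)$ to the growth bound on $W$ (with its reliance on $W(t,0)=0$), which the paper uses silently.
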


\begin{proof}
By ($W_1$), (\ref{X04}) and H\"older inequality, we get
\begin{eqnarray*}
I_{\lambda} (u) & = & \frac{1}{\lambda}\|u\|_{\lambda}^{2} - \int_{\mathbb{R}}W(t,u)dt\\
&\geq& \frac{1}{2}\|u\|_{\lambda}^{2} - \frac{1}{p}\int_{\mathbb{R}} \xi (t)|u(t)|^pdt\\
&\geq& \frac{1}{2}\|u\|_{\lambda}^{2} - \frac{1}{p} \|\xi\|_{L^{\frac{2}{2-p}}} \|u\|_{L^2}^{p}\\
&\geq& \frac{1}{2}\|u\|_{\lambda}^{2} - \frac{1}{p\theta_{0}^{p/2}}\|\xi\|_{L^{\frac{2}{2-p}}}\|u\|_{\lambda}^{p},
\end{eqnarray*}
which implies that $I_{\lambda}(u) \to +\infty$ as $\|u\|_{\lambda} \to +\infty$, since $1<p<2$. Consequently $I_{\lambda}$ is a functional bounded from below in $X_{\lambda}$.
\end{proof}

\begin{Lem}\label{elm2}
Suppose that $(L_1)-(L_2)$, $(W_1)$ and $(W_2)$ are satisfied. Then $I_{\lambda}$ satisfies the (PS)-condition for each $\lambda \geq \Lambda$.  
\end{Lem}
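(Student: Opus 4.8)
The plan is to follow the standard scheme for verifying (PS) on a reflexive Hilbert space, the only delicate point being the absence of a compact embedding of $X_\lambda$ into $L^2(\mathbb{R})$ on the whole line. Let $\{u_k\} \subset X_\lambda$ be a (PS)-sequence, so that $\{I_\lambda(u_k)\}$ is bounded and $I'_\lambda(u_k) \to 0$ in $X_\lambda^*$. First I would show $\{u_k\}$ is bounded. This is immediate from Lemma \ref{elm1}, where the estimate
$$
I_\lambda(u) \geq \frac{1}{2}\|u\|_\lambda^2 - \frac{1}{p\theta_0^{p/2}}\|\xi\|_{L^{\frac{2}{2-p}}}\|u\|_\lambda^p
$$
was obtained; since $1 < p < 2$ the right-hand side tends to $+\infty$ as $\|u\|_\lambda \to \infty$, so boundedness of $\{I_\lambda(u_k)\}$ forces $\sup_k \|u_k\|_\lambda < \infty$. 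As $X_\lambda$ is reflexive, after passing to a subsequence I may assume $u_k \rightharpoonup u$ weakly in $X_\lambda$. By Lemma \ref{lmx1} (together with $\|u\|_{X^\alpha} \leq \|u\|_\lambda$) the sequence is also bounded in $H^\alpha(\mathbb{R})$, and since $\alpha > 1/2$ the embedding into $L^2_{loc}(\mathbb{R})$ is compact, whence $u_k \to u$ strongly in $L^2([-R,R])$ for every $R > 0$.

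The core of the argument is to show that the nonlinear term
$$
\Psi_k := \int_\mathbb{R} (\nabla W(t,u_k) - \nabla W(t,u), u_k - u)\,dt \longrightarrow 0.
$$
By $(W_1)$ and H\"older's inequality with exponents $\tfrac{2}{2-p}$, $\tfrac{2}{p-1}$ and $2$ (note $\tfrac{2-p}{2} + \tfrac{p-1}{2} + \tfrac{1}{2} = 1$), I split each of the two contributions over $\{|t| \leq R\}$ and $\{|t| > R\}$. On the tail I bound, for instance,
$$
\int_{|t|>R}\xi(t)|u_k|^{p-1}|u_k - u|\,dt \leq \Big(\int_{|t|>R}\xi^{\frac{2}{2-p}}\Big)^{\frac{2-p}{2}}\|u_k\|_{L^2}^{p-1}\|u_k - u\|_{L^2},
$$
where the first factor is the tail of an $L^{\frac{2}{2-p}}$-function, hence can be made arbitrarily small by taking $R$ large, uniformly in $k$ (the $L^2$-norms being bounded by Lemma \ref{lmx2}). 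On the fixed bounded piece $\{|t|\leq R\}$ the same H\"older estimate retains the factor $\|u_k - u\|_{L^2(-R,R)}$, which tends to $0$ as $k \to \infty$ by the local strong convergence. Fixing $R$ first to control the tails and then letting $k\to\infty$ gives $\Psi_k \to 0$.

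Finally I would close the argument by testing. Writing
$$
\|u_k - u\|_\lambda^2 = \big(I'_\lambda(u_k) - I'_\lambda(u)\big)(u_k - u) + \Psi_k,
$$
the term $I'_\lambda(u_k)(u_k - u) \to 0$ because $\|I'_\lambda(u_k)\|_{X_\lambda^*} \to 0$ while $\{u_k - u\}$ is bounded, and $I'_\lambda(u)(u_k - u) \to 0$ because $u_k - u \rightharpoonup 0$ and $I'_\lambda(u)$ is a fixed element of $X_\lambda^*$. Together with $\Psi_k \to 0$ this yields $\|u_k - u\|_\lambda \to 0$, i.e. $u_k \to u$ strongly in $X_\lambda$, which is the (PS)-condition.

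I expect the main obstacle to be precisely the second paragraph: since $X_\lambda \hookrightarrow L^2(\mathbb{R})$ is only continuous and not compact (the domain is the whole line and $l$ need not be coercive), one cannot pass to the limit in $\Psi_k$ directly; it is the integrability $\xi \in L^{\frac{2}{2-p}}(\mathbb{R})$, combined with the local compactness afforded by $\alpha > 1/2$, that must be exploited to recover the needed compactness.
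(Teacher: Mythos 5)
Your proposal is correct and follows essentially the same route as the paper: boundedness of the (PS)-sequence from the coercivity estimate of Lemma \ref{elm1}, weak convergence plus local compactness of the embedding, a splitting of the nonlinear term into a bounded piece (controlled by strong $L^2_{loc}$ convergence) and a tail (controlled by the smallness of $\int_{|t|>R}\xi^{2/(2-p)}$ uniformly in $k$), and the standard identity relating $\|u_k-u\|_{\lambda}^2$ to $\big(I'_{\lambda}(u_k)-I'_{\lambda}(u)\big)(u_k-u)$ and the nonlinear term. The only differences are cosmetic (your three-exponent H\"older estimate versus the paper's Cauchy--Schwarz on the bounded piece), so there is nothing to add.
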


\begin{proof}
Assume that $\{u_n\} \in X_{\lambda}$ is a sequence such that $I_{\lambda}(u_n)$ is bounded and $IÕ_{\lambda}(u_n) \to 0$ as $n\to \infty$. By Lemma \ref{elm1}, it is clear that $\{u_n\}$ is bounded in $X_{\lambda}$. Thus, there exists a constant $\mathfrak{C}>0$ such that 
\begin{equation}\label{e01}
\|u_n\|_{L^r} \leq \frac{1}{\theta_{0}^{1/2}|\{L<k\}|^{\frac{r-2}{2r}}}\|u_n\|_{\lambda} \leq \mathfrak{C}, \;\;\mbox{for all}\;\lambda \geq \Lambda,
\end{equation}
where $r\in [2, \infty]$. Passing to a subsequence if necessary, we may assume that $u_n \rightharpoonup u$ weakly in $X_{\lambda}$. For any $\epsilon >0$, since $\xi(t) \in L^{\frac{2}{2-p}}(\mathbb{R})$, we can choose $T>0$ such that
\begin{equation}\label{e02}
\left(\int_{|t|>T} |\xi (t)|^{\frac{2}{2-p}}dt \right)^{\frac{2-p}{2}} < \epsilon. 
\end{equation}  
Moreover, since $u_n \to u\quad\mbox{in}\;\;L_{loc}^{\infty}(\mathbb{R}, \mathbb{R}^n),$ we get $u_n \to u\quad\mbox{in}\;\;L_{loc}^{2}(\mathbb{R}, \mathbb{R}^n)$. Hence
\begin{equation}\label{e03} 
\lim_{n \to \infty}\int_{|t|\leq T} |u_n(t) - u(t)|^2dt = 0.
\end{equation}
Therefore, from (\ref{e03}), there exists $n_0\in \mathbb{N}$ such that
\begin{equation}\label{e04}
\lim_{n\to \infty} \int_{|t|<T} |u_n(t) - u(t)|^2dt < \epsilon^2, \;\;\mbox{for}\;\;n \geq n_0.
\end{equation}
Hence, by ($W_1$), (\ref{e01}), (\ref{e04}) and the H\"older inequality, for any $n\geq n_0$, we have
\begin{eqnarray}\label{e05}
&&\int_{|t|\leq T} |\nabla W(t, u_n(t)) - \nabla W(t, u(t))||u_n(t) - u_n(t)|dt \nonumber\\
&&\leq \left(\int_{|t|\leq T} |\nabla W(t, u_n(t)) - \nabla W(t, u(t))|^2 \right)^{1/2}\left( \int_{|t|\leq T} |u_n(t) - u(t)|^2dt \right)^{1/2}\nonumber\\
&&\leq \epsilon \left( \int_{|t|\leq T} 2(|\nabla W(t, u_n(t))|^2 + |\nabla W(t, u(t))|^2)dt \right)^{1/2}\nonumber\\
&&\leq 2\epsilon \left( \int_{|t|\leq T}|\xi(t)|^2\left( |u_n(t)|^{2(p-1)} + |u(t)|^{2(p-1)} \right)dt \right)^{1/2}\nonumber \\
&&\leq 2\epsilon \left[\|\xi\|_{L^{\frac{2}{2-p}}} \left( \|u_n\|_{L^2}^{2(p-2)} + \|u\|_{L^2}^{2(p-1)} \right) \right]^{1/2}\nonumber\\
&&\leq 2\epsilon \left[ \|\xi\|_{L^{\frac{2}{2-p}}}^{2}\left(  \mathfrak{C}^{2(p-1)} + \|u\|_{L^2}^{2(p-1)} \right)\right]^{1/2}.
\end{eqnarray} 
On the other hand, by (\ref{e01}), (\ref{e02}), (\ref{e04}) and ($W_1$), we have
\begin{eqnarray}\label{e06} 
&&\int_{|t|>T} |\nabla W(t, u_n(t)) - \nabla W(t, u(t))||u_n(t) - u(t)|dt\nonumber\\
&& \leq 2\int_{|t|>T} |\xi (t)|(|u_n(t)|^p + |u(t)|^p)dt\nonumber\\
&&\leq 2\epsilon \frac{1}{\theta_0^{p/2}} (\|u_n\|_{\lambda}^{p} + \|u\|_{\lambda}^{p})\nonumber\\
&&\leq \frac{2\epsilon}{\theta_0^{p/2}}(\mathfrak{K}^p + \|u\|_{\lambda}^{p}).
\end{eqnarray}
Since $\epsilon$ is arbitrary, combining (\ref{e05}) and (\ref{e06}), we have
\begin{equation}\label{e07}
\int_{\mathbb{R}} |\nabla W(t, u_n(t)) - \nabla W(t, u(t))||u_n(t) - u(t)|dt < \epsilon,
\end{equation} 
as $n\to \infty$. Hence,
\begin{equation}\label{e08}
\langle I'_{\lambda}(u_n) - I'_{\lambda}(u), u_n - u\rangle =  \|u_n - u\|_{\lambda}^{2} + \int_{\mathbb{R}} (\nabla W(t, u_n(t)) - \nabla W(t, u(t)))(u_n(t) - u(t))dt.
\end{equation}
From, $\langle IÕ_{\lambda}(u_n) - IÕ_{\lambda}(u), u_n - u \rangle \to 0$, (\ref{e07}) and (\ref{e08}), we get $u_n \to u$ strongly in $X_{\lambda}$. Hence, $I_{\lambda}$ satisfies (PS)-condition.
\end{proof}

\noindent
{\bf Proof of Theorem \ref{main1}:} From Lemmas \ref{Rlem}, \ref{elm1}, \ref{elm2}, we know that $c_{\lambda} = \inf_{X_{\lambda}}I_{\lambda}(u)$ is a critical value of functional $I_{\lambda}$; that is, there exists a critical point $u_\lambda \in X_{\lambda}$ such that $I_{\lambda}(u_{\lambda}) = c_{\lambda}$. 

Finally, we show that $u_{\lambda} \neq 0$. Let $u_0 \in (W_0^{1,2}(\mathbb{I})\cap X_{\lambda} )\setminus \{0\}$ and $\|u_0\|_{\infty} \leq 1$, then by ($W_2$), we have
\begin{eqnarray}\label{e09}
I_{\lambda}(su_0) &=& \frac{1}{2}\|su_0\|_{\lambda}^{2} - \int_{\mathbb{R}} W(t, su_0(t))dt\nonumber\\
& \leq &\frac{s^2}{2}\|u_0\|_{\lambda}^{2} - \int_{\mathbb{I}}W(t, su_0(t))dt\nonumber\\
&\leq& \frac{s^2}{2}\|u_0\|_{\lambda}^{2} - \eta s^{\nu}\int_{\mathbb{I}}|u_0(t)|^{\nu}dt,\;\;0< s < \delta.
\end{eqnarray}
Since $1< \nu <2$, it follows from (\ref{e09}) that $I_{\lambda}(s u_0) <0$ for $s>0$ small enough. Hence $I_{\lambda}(u_{\lambda}) =  c_{\lambda} <0$, therefore, $u_{\lambda}$ is a nontrivial critical point of $I_{\lambda}$ and so $u_{\lambda}$ is a nontrivial weak solution of problem (\ref{01}). The proof is complete. $\Box$

\section{Concentration of Solutions}

In the following, we study the concentration of solution for problem (\ref{01}) as $\lambda \to \infty$. Define
$$
\tilde{c} = \inf_{w\in  E_{0}^{\alpha}} I_{\lambda}|_{E_{0}^{\alpha}}(w),
$$ 
where $I_{\lambda}|_{E_{0}^{\alpha}}$ is a restriction of $I_{\lambda}$ on $E_{0}^{\alpha}$; that is,
$$
I_{\lambda}|_{E_{0}^{\alpha}}(w) = \frac{1}{2}\int_{0}^{\mathbb{T}}|{_{0}}D_{t}^{\alpha}w(t)|dt - \int_{0}^{\mathbb{T}}W(t, w(t))dt,
$$
for $w\in H^{\alpha}(\mathbb{R})$. Similar to the proof of Theorem \ref{main1} it is easy to prove that $\tilde{c} <0$ can be achieved. Since $E_{0}^{\alpha} \subset X_{\lambda}$ for all $\lambda >0$, we get
$$
c_{\lambda} \leq \tilde{c}<0,\quad\mbox{for all}\;\;\lambda > \Lambda.
$$

\noindent
{\bf Proof of Theorem \ref{main2}:} We follow the arguments in \cite{TBAPZW}. For any sequence $\lambda_n \to \infty$, let $u_n = u_{\lambda_n}$ be the critical point of $I_{\lambda_n}$ obtained in Theorem \ref{main1}. Thus
\begin{equation}\label{c01}
I_{\lambda_n}(u_n) \leq \tilde{c}<0
\end{equation}
and 
\begin{eqnarray*}
I_{\lambda_n}(u_n) &=& \frac{1}{2}\|u_n\|_{\lambda_n}^{2} - \int_{\mathbb{R}} W(t, u_n(t))dt\\
&\geq& \frac{1}{2}\|u_n\|_{\lambda_n}^{2} - \frac{1}{p\theta_0^{p/2}}\|\xi\|_{L^{\frac{2}{2-p}}}\|u\|_{\lambda_n}^{p},
\end{eqnarray*}
which implies
\begin{equation}\label{c02}
\|u_n\|_{\lambda_n} \leq C,
\end{equation}
where the constant $C>0$ is independent of $\lambda_n$. Therefore, we may assume that $u_n \rightharpoonup \tilde{u}$ in $X_{\lambda}$ and $u_n \to \tilde{u}$ in $L_{loc}^{p}(\mathbb{R})$ for $2\leq p \leq \infty$. By Fatou's Lemma, we have
\begin{eqnarray*}
\int_{\mathbb{R}} l(t)|\tilde{u}(t)|^2dt &\leq& \liminf_{n\to \infty} \int_{\mathbb{R}} l(t)u_n^{2}(t)dt \\
&\leq& \liminf_{n \to \infty} \int_{\mathbb{R}} (L(t)u_n(t),u_n(t))dt\\
&\leq& \liminf_{n\to \infty} \frac{\|u_n\|_{\lambda_n}^{2}}{\lambda_n} = 0,
\end{eqnarray*}
thus $\tilde{u} = 0$ a.e. in $\mathbb{R} \setminus \mathbb{J}$, $\tilde{u} \in E_{0}^{\alpha}$ by ($L_2$). Now for any $\varphi \in C_{0}^{\infty}((0, \mathbb{T}), \mathbb{R}^n)$, since $\langle I'_{\lambda_n}(u_n), \varphi \rangle = 0$, it is easy to check that
$$
\int_{0}^{\mathbb{T}} {_{0}}D_{t}^{\alpha}\tilde{u}\cdot {_{0}}D_{t}^{\alpha}\varphi dt - \int_{0}^{\mathbb{T}} (\nabla W(t, \tilde{u}(t)), \varphi (t))dt = 0,
$$
that is, $\tilde{u}$ is a weak solution of (\ref{08}) by the density of $C_{0}^{\infty}((0, \mathbb{T}), \mathbb{R}^n)$ in $E_{0}^{\alpha}$.

Next, we show that $u_n(t) \to \tilde{u}(t)$ in $L^{p}(\mathbb{R})$ for $2\leq p < \infty$. Otherwise, by vanishing lemma (see Lemma 2.1 in \cite{CT1}) there exists $\delta >0$, $R_0>0$ and $t_n\in \mathbb{R}$ such that
$$
\int_{t_n - R_0}^{t_n + R_0} (u_n - \tilde{u})^2dt \geq \delta.
$$
Moreover, $t_n \to \infty$, hence $|(t_n - R_0, t_n + R_0) \cap \{l< k\}| \to 0$. By the H\"older inequality, we have
$$
\int_{(t_n - R_0, t_n + R_0)\cap \{l<k\}} |u_n - \tilde{u}|^2dt \leq |(t_n-R_0, t_n + R_0)\cap \{l<k\}|\|u_n-\tilde{u}\|_{\infty}^{2} \to 0.
$$ 
Consequently 
\begin{eqnarray*}
\|u_n\|_{\lambda_n}^{2} & \geq & \lambda_n k \int_{(t_n - R_0, t_n + R_0)\cap\{l\geq k\}}|u_n(t)|^2dt\\
& = &\lambda_n k\int_{(t_n-R_0, t_n+R_0)\cap \{l\geq k\}} |u_n(t) - \tilde{u}(t)|^2dt\\
& = &\lambda_nk\left( \int_{(t_n - R_0, t_n + R_0)} |u_n(t) - \tilde{u}(t)|^2dt - \int_{(t_n - R_0, t_n + R_0)\cap \{l<k\}} |u_n(t) - \tilde{u}(t)|^2dt  \right) + o(1)\\
&\to& \infty,
\end{eqnarray*}
which contradicts (\ref{c02}). By virtue of $\langle I'_{\lambda_n}(u_n), u_n \rangle = \langle  I'_{\lambda_n}(u_n), \tilde{u}\rangle = 0$ and the fact that $u_n(t) \to \tilde{u}(t)$ strongly in $L^p(\mathbb{R})$ for $2\leq p < \infty$, we have
$$
\lim_{n\to \infty} \|u_n\|_{\lambda_n}^{2} = \|\tilde{u}\|_{\lambda_n}^2.
$$
Hence, $u_n \to \tilde{u}$ strongly in $X_{\lambda}$. Moreover, from (\ref{c01}), we have $\tilde{u}\neq 0$. This completes the proof.

\medskip


\end{document}